\documentclass{amsart}
\usepackage{amssymb}
\usepackage{xypic}
\newtheorem{theorem}{Theorem}
\newtheorem{definition}[theorem]{Definition}
\newcommand{\bd}[1]{\begin{definition}\label{d-#1}\rm}
\newcommand{\ed}{\end{definition}}
\newtheorem{lemma}[theorem]{Lemma}
\newtheorem{cor}[theorem]{Corollary}
\newcommand{\bc}[1]{\begin{cor}\label{c-#1}}
\newcommand{\ec}{\end{cor}}
\newcommand{\bl}[1]{\begin{lemma}\label{l-#1}} 
\newcommand{\el}{\end{lemma}}               
\newcommand{\HNN}{\text{HNN}}
\newcommand{\gp}[1]{\langle#1\rangle}
\newcommand{\ugp}{\{1\}}
\newcommand{\prefdef}[1]{Definition \ref{d-#1}}
\newcommand{\prefcor}[1]{Corollary \ref{c-#1}}
\newcommand{\tG}{\tilde G}
\newcommand{\inv}{^{-1}}
\title[Virtually Free pro-$p$ groups whose Torsion]% 
 {Addendum: Virtually Free pro-$p$ groups whose Torsion Elements have finite Centralizer}%

\author{W. Herfort and P.A. Zalesskii}

\newcommand{\classno}{20E18 (primary), 20E06, 22C05 (secondary).}

\begin{document}

\begin{abstract}
We fill details in the proof of \cite[Lemma 13]{hz} (that is \cite[Lemma 3.2]{hza}). 
For easier reading we include
the relevant part of section 3 ibidem.
\medskip
 
\classno
\end{abstract}
\maketitle
\setcounter{section}2
\section{HNN-embedding}
We introduce a notion of a pro-$p$ \HNN-group as a
generalization of pro-$p$ \HNN-extension in the sense of
\cite[page 97]{RZ2}. It also can be defined as a sequence of pro-$p$
\HNN-extensions. During the definition to follow, $i$ belongs to a
finite set $I$ of indices.

\bd{HNN-grp} Let $G$ be a pro-$p$ group and $A_i, B_i$ be
subgroups of $G$ with isomorphisms $\phi_i:A_i\longrightarrow
B_i$. The pro-$p$ \HNN-group  is then a pro-$p$ group
$\HNN(G,A_i,\phi_i,z_i)$ having presentation
$$\HNN(G,A_i,\phi_i,z_i)=\langle G, z_i\mid rel(G), \forall a_i\in A_i:\ \
a_i^{z_i}=\phi_i(a_i)\rangle.$$ The group $G$ is called the {\em
base group}, $A_i,B_i$ are called {\em associated subgroups} and
$z_i$ are called the {\em stable letters}. \ed

For the rest of this section let $G$ be a finitely generated virtually free pro-$p$ group, and
fix an open free pro-$p$ normal subgroup $F$ of $G$ of minimal index.
Also suppose that $C_F(t)=\ugp$ for every torsion element $t\in
G$. Let $K:=G/F$ and form $G_0:=G\amalg K$. Let
$\psi:G\to K$ denote the canonical projection.
It extends to an epimorphism $\psi_0:G_0\to K$, 
by sending $g\in G$ to $gF/F\in K$ and each $k\in K$ identically to $k$, and using the universal
property of the free pro-$p$ product. Remark that the kernel
of $\psi_0$, say $L$, 
is an open subgroup of $G_0$ and, as $L\cap G=F$ and $L\cap K=\ugp$,
as a consequence of the pro-$p$ version of the Kurosh subgroup theorem, \cite[Theorem 9.1.9]{RZ1 2000},
$L$ is free pro-$p$.
Let $I$ be the set
of all $G$-conjugacy classes of maximal finite subgroups of $G$ and
observe that in light of \cite[Lemma 8]{hz} the set $I$ is finite.
Fix, for every $i\in I$, a finite subgroup $K_i$ of
$G$ in the $G$-conjugacy class $i$. We define a pro-$p$ \HNN-group
by considering first  $\tG_0:=G_0\amalg F(z_i\mid i\in I)$ with $z_i$
constituting a free set of generators, and then taking the normal
subgroup $R$ in $\tG_0$ generated by all elements of the form
$k_i^{z_i}\psi(k_i)\inv$, with $k_i\in K_i$ and $i\in I$. Finally
set
$$\tG:=\tG_0/R,$$
and, since all $K_i$ are finite, by \cite[Prop.\,9.4.3]{RZ1 2000} it is a proper \HNN-group
$\HNN(G_0, K_i,\phi_i,z_i)$,
where $\phi_i:=\psi_{|K_i}$,  $G_0$ is the base group, the $K_i$
are associated subgroups, and the $z_i$ form a set of stable
letters in the sense of \prefdef{HNN-grp}.

\medskip
Let us show that $\tG$ is virtually free pro-$p$. The above epimorphism 
$\psi_0:G_0\longrightarrow K$ extends to $\tG\longrightarrow K$
by the universal property of the HNN-extension, so $\tG$ is a
semidirect product $\tilde F\rtimes K$ of its kernel $\tilde F$
with $K$. By \cite[Lemma 10]{HZ 07}, every open torsion-free
subgroup of $\tG$ must be free pro-$p$, so $\tilde F$ is free pro-$p$.

\medskip

\newcommand{\N}{\mathbb N}

The objective of the section is to give a more detailed version of the proof of \cite[Lemma 13]{hz}, i.e.,
to  show that the centralizers of torsion elements in $\tilde G$ are finite.

\bl{centr} Let $\tilde G= \HNN(G_0,K_i,\phi_i,z_i)$ and $\tilde F$
be as explained.   Then $C_{\tilde F}(t)=1$ for every torsion
element $t\in \tilde G$. \el

\begin{proof} There is a standard pro-$p$ tree $S:=S(\tilde G)$
associated to $\tilde G:= \HNN(G_0,K_i,\phi_i,z_i)$ on which
$\tilde G$ acts naturally such that the vertex stabilizers are
conjugates of $G_0$ and each edge stabilizer is a conjugate of
some $K_i$.

\medskip

\noindent{\em Claim:
Let $e_1,e_2$ be two edges of $S$ with a common vertex
$v$ which is not terminal vertex of both of them. Then the intersection of the stabilizers $\tilde
G_{e_1}\cap\tilde G_{e_2}$ is trivial.
}

\medskip

{\em Proof of the Claim: } By translating $e_1$, $e_2$, $v$ if necessary we may assume
that $G_0$ is the stabilizer of $v$. Then  we
have two cases:

1) $v$ is initial vertex of $e_1$ and $e_2$. Then $\tilde G_{e_1}=K_i^g$ and 
$\tilde G_{e_2}=K_{i'}^{g'}$ with $g,g'\in G_0$
and either $i\neq i'$ or $g\not\in K_ig'$ and by construction of $\tilde G$ 
we have $K_i^g\neq K_{i'}^{g'}$ if $t\neq t'$.
Suppose that $K_i^g\cap K_{i'}^{g'}\neq\ugp$. Then, since $G_0=G\amalg K$, we may apply
\cite[Theorem 2.9]{hz}, in order to deduce the existence of
$g_0\in G_0$ with $K_i^{gg_0}\cap K_{i'}^{g'g_0}\le G$. Now by \cite[Lemma 2.7]{HZ 07} 
two distinct maximal finite subgroups of
$G_0$ have trivial intersection. So we have $K_i^g\cap K_{i'}^{g'}=\ugp$, as needed.
\medskip

2) $v$ is the terminal vertex of $e_1$ and the initial vertex of
$e_2$. Then $\tilde G_{e_1}=K^g$ and $\tilde G_{e_2}=K_i^{g'}$ for
$g,g'\in G_0$ so they intersect trivially by the definition of
$G_0$ and \cite[Theorem 2.9]{hz}. So the Claim holds.

\bigskip

Now pick a torsion element $t\in \tilde G$ and $\tilde f\in\tilde F$ with
$t^{\tilde f}=t$. Let $e\in E(S)$ be an edge stabilized by $t$. Then $\tilde fe$
is also stabilized by $t$ and, as by \cite[Theorem 3.7]{RZ2},
the fixed set $S^t$ is a subtree, the path $[e,\tilde fe]$ is fixed by
$t$ as well. Note that $e$ and $\tilde{f}e$ cannot have a common vertex, since $\tilde{f}$ cannot stabilize any vertex.
Moreover, $S^t$ is {\em infinite} since $\gp{\tilde f}$ is torsion free and acts freely on $S^t$.
Now $S^t$ is connected and \prefcor{db} in the Appendix implies that it
must have path components of arbitrary cardinality. 
Therefore we can choose $e$ and $\tilde f$ such that $[e,\tilde fe]$ 
contains at least 3 pairwise adjacent edges and so $[e,\tilde fe]$
contains at least one vertex which 
is not the terminal point of all its incident edges.
Then by the Claim $t=1$.
\end{proof}

\appendix
\section*{Appendix: Path components of finite diameter in a profinite graph}
We shall need a general result about {\em profinite graphs}. 
Composition $RS$ of binary relations $R$ and $S$ on a set $X$ 
is defined as $x RS y$ if and only if there is $z\in X$
so that $(x,z)\in R$ and $(z,y)\in S$ holds. 
Define inductively $R^1:=R$ and $R^{n+1}:=R^nR$ for $n\in\N$. Let $R^o$ denote the 
{\em converse} relation, i.e., $(x,y)\in R^o$ if and only if $(y,x)\in R$ and, as common, $\Delta:=\{(x,x)\mid x\in X\}$
is the {\em diagonal}. 

For an abstract graph $\Gamma$ consider 
$R_0:=\{(x,y)\in\Gamma\times\Gamma\mid d_1(x)=d_0(y)\}$, and, set $R:=R_0\cup R_0^o\cup\Delta$. 
Then $x R^n y$ if and only if there is a geodesic of length not exceeding $n$ in $\Gamma$
containing  $x$ and $y$. The path-components of $\Gamma$ turn out to be the equivalence classes of $\Sigma:=\bigcup_nR^n$.
Define $\delta(\Gamma)$ to be the supremum of the diameters of its connected components then 
$$\delta(\Gamma)\le n \text{ if and only if } \Sigma=R^n.$$ 
When $X$ is a compact space then a standard compactness argument
implies that with $R$ compact every $R^n$ is compact.
Every profinite graph is an abstract one.

\begin{lemma}\label{l:geo-discrete}
Let $\Gamma$ be a profinite graph and $\delta(\Gamma)<\infty$. Then the path components of $\Gamma$
are exactly the connected components.
\end{lemma}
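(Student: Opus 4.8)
The plan is to use the hypothesis $\delta(\Gamma)<\infty$ only to fix some $n\in\N$ with $\Sigma=R^{n}$, which makes $\Sigma$ a \emph{closed} equivalence relation: by the compactness remark preceding the lemma $R^{n}$ is compact, hence closed in $\Gamma\times\Gamma$. Granting this, I would show that the partition of $\Gamma$ into path components (the $\Sigma$-classes) and the partition into connected components refine one another, and therefore coincide.

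For the first refinement, observe that every $\Sigma$-class $P$ is a subgraph: if $e\in P$ then $d_{0}(e)$ and $d_{1}(e)$ are $R$-adjacent to $e$, hence lie in $P$. Moreover $P=R^{n}[x]$ is a slice of the closed set $R^{n}$, so $P$ is closed, hence a profinite subgraph. Any two points of $P$ are joined by a path of length at most $n$ inside $P$, so $P$, and with it every finite continuous quotient graph of $P$, is path-connected, hence connected; thus $P$ is a connected profinite subgraph and lies in a unique connected component of $\Gamma$. This already shows that the path-component partition refines the connected-component partition.

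For the reverse refinement I would pass to finite quotients. Write $\Gamma=\varprojlim_{i}\Gamma_{i}$ over the finite continuous quotient graphs $\rho_{i}\colon\Gamma\to\Gamma_{i}$, and let $R_{i}$ on $\Gamma_{i}$ be built from the incidence maps of $\Gamma_{i}$ exactly as $R$ was built on $\Gamma$. Since a graph morphism carries a path to a path of no greater length, $(\rho_{i}\times\rho_{i})(R^{n})\subseteq R_{i}^{\,n}$; conversely, if $(\rho_{i}x,\rho_{i}y)\in R_{i}^{\,n}$ for every $i$, then the inverse limit over $i$ of the nonempty finite sets of walks of length at most $n$ from $\rho_{i}x$ to $\rho_{i}y$ in $\Gamma_{i}$ is nonempty, and yields a walk of length at most $n$ from $x$ to $y$ in $\Gamma$. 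Hence $R^{n}=\varprojlim_{i}R_{i}^{\,n}$, so the quotient space $\Gamma/\Sigma=\Gamma/R^{n}$ is homeomorphic to $\varprojlim_{i}(\Gamma_{i}/R_{i}^{\,n})$, an inverse limit of finite discrete spaces, hence profinite. Viewing $\Gamma/\Sigma$ as a profinite graph with no edges, the projection $q\colon\Gamma\to\Gamma/\Sigma$ is a morphism of profinite graphs (it identifies each edge with its endpoints). Consequently, if $D$ is a connected component of $\Gamma$, then $q(D)$ is a connected profinite graph with no edges, which forces $q(D)$ to be a single point; so $D$ is contained in one $\Sigma$-class. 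This gives the reverse refinement, and the two partitions coincide, proving the lemma.

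The step I expect to be the real obstacle is the identity $R^{n}=\varprojlim_{i}R_{i}^{\,n}$, i.e.\ that forming the quotient by $R^{n}$ commutes with the inverse limit; its nontrivial direction is the compactness argument lifting compatible finite walks. Once $\Gamma/\Sigma$ is known to be profinite, everything else is formal.
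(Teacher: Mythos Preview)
Your argument is correct and follows the same skeleton as the paper's proof: fix $n$ with $\Sigma=R^{n}$ so that $\Sigma$ is closed, observe that $\Gamma/\Sigma$ is a profinite graph with no edges and hence totally disconnected, and conclude that connected components map to points under $\Gamma\to\Gamma/\Sigma$, forcing the two partitions to coincide.

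The only real difference is how you justify that $\Gamma/\Sigma$ is profinite. You build this by hand, proving $R^{n}=\varprojlim_{i}R_{i}^{\,n}$ via a compactness argument on finite walks and deducing $\Gamma/\Sigma\cong\varprojlim_{i}\Gamma_{i}/R_{i}^{\,n}$. This works, but it is the step you yourself flag as the obstacle, and it is unnecessary: once $\Sigma$ is a \emph{closed} equivalence relation on the profinite space $\Gamma$, the quotient $\Gamma/\Sigma$ is automatically compact Hausdorff totally disconnected (a standard fact about profinite spaces), and since every edge is $\Sigma$-equivalent to its endpoints the quotient graph has no edges. The paper simply invokes this: ``$\Sigma=R^{n}$ is a closed equivalence relation \ldots\ the quotient graph $\Gamma/\Sigma$ does not contain edges and so it is totally disconnected.'' So your detour through inverse limits of walks buys nothing extra here; you can drop it and shorten the proof considerably.
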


\begin{proof}
Set $n:=\delta(\Gamma)$. Since $R$ is closed $\Sigma=R^n$ is a closed equivalence relation.
Hence its equivalence classes, the path components, are all closed. The quotient graph
$\Gamma/\Sigma$ does not contain edges and so it is totally disconnected.
Since every connected component of $x\in\Gamma$ contains the path component of $x$,  each
path component is a connected component of $\Gamma$.
\end{proof}

\bc{db}
A connected profinite graph $\Gamma$ with $\delta(\Gamma)<\infty$ consists of a single path component.
\ec

\end{document}